\documentclass[preprint,12pt]{elsarticle}




\addtolength{\topmargin}{-9mm}
\setlength{\oddsidemargin}{5mm}  
\setlength{\evensidemargin}{0mm}
\setlength{\textwidth}{16cm}
\setlength{\textheight}{21cm}    

\usepackage{color}
\usepackage{amssymb}
\usepackage{amsthm}
\usepackage{amsmath}
\usepackage{epic}
\usepackage{CJK}
\usepackage{setspace}
\usepackage{bm}
\newtheorem{thm}{Theorem}[section]

\newtheorem{lem}[thm]{Lemma}

\newtheorem{defi}[thm]{Definition}

\newtheorem{rem}{Remark}

\journal{~~}

\begin{document}
\begin{spacing}{1.15}
\begin{CJK*}{GBK}{song}
\begin{frontmatter}
\title{\textbf{A tensor's spectral bound on the clique number}}

\author[label a]{Chunmeng Liu\corref{cor}}\ead{liuchunmeng214@nenu.edu.cn/liuchunmeng0214@126.com}
\author[label b]{Changjiang Bu}\ead{buchangjiang@hrbeu.edu.cn}
\cortext[cor]{Corresponding author}

\address{
\address[label a]{Academy for Advanced Interdisciplinary Studies, Northeast Normal University, Changchun 130024, PR China}
\address[label b]{College of Mathematical Sciences, Harbin Engineering University, Harbin 150001, PR China}
}

\begin{abstract}
In this paper, we study the spectral radius of the clique tensor $\mathcal{A}(G)$ associated with a graph $G$. 
This tensor is a higher-order extensions of the adjacency matrix of $G$. 
A lower bound of the clique number is given via the spectral radius of $\mathcal{A}(G)$. 
It is an extension of Nikiforov's spectral bound and tighter than the bound of Nikiforov in some classes of graphs. 
Furthermore, we obtain a spectral version of the Erd\H{o}s-Simonovits stability theorem for clique tensors based on this bound.
\end{abstract}

\begin{keyword}
Tensor; Spectral radius; Clique number; Stability theorem
\\
\emph{AMS classification:} 05C50, 05C35
\end{keyword}
\end{frontmatter}

\section{Introduction}

The graphs considered throughout the paper are all simple. 
A \emph{clique} is a subset of vertices within a graph that forms a complete subgraph. 
The \emph{clique number} of a graph is defined as the size of its largest clique.
A clique consisting of $t$ vertices is referred to as a \emph{$t$-clique}.
The collection of all $t$-cliques in a graph $G$ is represented by $C_{t}(G)$, and the count of $t$-cliques in $G$ is denoted by $|C_{t}(G)|$. 

\subsection{Spectral radius and the clique number}

For a graph $G$, let $\rho(G)$ denote the largest eigenvalue of its adjacency matrix, which is also called the \emph{spectral radius} of $G$. 
The research on the matrix's spectral bounds of clique numbers has yielded numerous significant results. 
For spectral bounds of the adjacency matrix, refer to \cite{Wilf_1986,Bollobs_2007,Nikiforov_2009_2}. 
For the Laplacian matrix's spectral bounds, see \cite{Lu_2007} and for the signless Laplacian matrix's spectral bounds, see \cite{He_2013}. 
In this paper, we obtain a bound for the clique number via the tensor's spectra.

An order $m$ dimension $n$ complex tensor $\mathcal{A}=(a_{i_{1}i_{2}\cdots i_{m}})$ is a multidimensional array comprising $n^{m}$ entries, where each index $i_{j}=1,2,\cdots,n$ for $j=1,2,\cdots,n$. 
In 2005, Qi \cite{Qi_2005} and Lim \cite{Lim_2005} independently introduced the concept of eigenvalues for tensors. 
Recently, the authors \cite{Liu_2023} proposed the \emph{$t$-clique tensor} of a graph. 
Specifically, the $2$-clique tensor is the adjacency matrix.

\begin{defi}\textup{\cite{Liu_2023}}
Let $G$ be an $n$-vertex graph. 
An order $t$ and dimension $n$ tensor $\mathcal{A}(G)=(a_{i_{1}i_{2}\cdots i_{t}})$ is called the $t$-clique tensor of $G$, if
\begin{align*}
a_{i_{1}i_{2}\cdots i_{t}}=
\begin{cases}
\frac{1}{(t-1)!}, &\{i_{1},\cdots,i_{t}\}\in C_{t}(G).\\
0, &otherwise.
\end{cases}
\end{align*}
\end{defi}
The maximum modulus of all eigenvalues of the $t$-clique tensor of graph $G$ is referred to as the \emph{$t$-clique spectral radius} of $G$, denoted by $\rho_t(G)$. 
When $t=2$, it simplifies to the spectral radius of the adjacency matrix of $G$.

In this paper, we give a bound for the clique number in terms of $t$-clique spectral radius. 
For two nonnegative integers $z$ and $s$, let $\binom{z}{s}=\begin{cases}
\frac{z!}{s!(z-s)!}, & z\geq s.\\
0, & z<s.
\end{cases}$
\begin{thm}\label{thm main}
For a graph $G$, let $\omega$ be the clique number of $G$. 
Then
\begin{align*}
\rho_{t}(G)\leq\frac{t}{\omega}\binom{\omega}{t}^{\frac{1}{t}}|C_{t}(G)|^{\frac{t-1}{t}}.
\end{align*}
Moreover, if $G$ is a complete regular $\omega$-partite graph for $\omega\geq t\geq2$, then the equality is achieved in the above inequality.
\end{thm}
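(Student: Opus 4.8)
The plan is to bound $\rho_t(G)$ via the variational (Rayleigh-quotient) characterization of the spectral radius of a symmetric nonnegative tensor and then optimize over a cleverly chosen test vector supported on a maximum clique. Recall that for a symmetric nonnegative tensor $\mathcal{A}$ of order $t$ one has $\rho_t(G)=\max\{\,\mathcal{A}\mathbf{x}^t : \mathbf{x}\ge 0,\ \sum_i x_i^t=1\,\}$, where $\mathcal{A}\mathbf{x}^t=\sum_{i_1,\dots,i_t} a_{i_1\cdots i_t}x_{i_1}\cdots x_{i_t}$; by the definition of the $t$-clique tensor this is exactly $t!/(t-1)!=t$ times $\sum_{\{j_1,\dots,j_t\}\in C_t(G)} x_{j_1}\cdots x_{j_t}$, i.e. $\mathcal{A}\mathbf{x}^t = t\sum_{K\in C_t(G)}\prod_{j\in K}x_j$. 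So the first step is to record this clean formula and reduce the theorem to producing, for an appropriate extremal or near-extremal $\mathbf{x}$, an upper estimate of $t\sum_{K}\prod_{j\in K}x_j$ subject to $\sum_i x_i^t=1$.

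The second step is the counting/optimization heart of the argument. Let $\mathbf{x}$ be a nonnegative optimal vector; by a support-shrinking argument (if $x_i>0$ but vertex $i$ lies in no $t$-clique, delete it) we may assume $\mathbf{x}$ is supported on a set of vertices each of which lies in some $t$-clique. Since every $t$-subset appearing in the sum is a clique, it sits inside some maximum clique, hence the relevant vertices span subgraphs of clique number at most $\omega$. The key inequality I would aim for is
\begin{align*}
\sum_{K\in C_t(G)}\prod_{j\in K}x_j \ \le\ \binom{\omega}{t}\Big/\binom{\omega}{t}\cdots
\end{align*}
more precisely a bound of AM--GM type: applying the power-mean inequality to each product $\prod_{j\in K}x_j\le \frac1t\sum_{j\in K}x_j^t$ gives $\sum_K\prod_{j\in K}x_j\le \frac1t\sum_K\sum_{j\in K}x_j^t=\frac1t\sum_j d_t(j)\,x_j^t$, where $d_t(j)$ is the number of $t$-cliques through $j$. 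Then one must bound $\sum_j d_t(j)x_j^t$ against $|C_t(G)|$ and the clique number; here the constraint that all cliques live in $\le\omega$-cliques enters, together with the normalization $\sum_j x_j^t=1$ and a Maclaurin/Newton-inequality estimate comparing $\sum_K\prod_{j\in K}x_j$ (an elementary symmetric expression of the $x_j$'s restricted to cliques) with $\big(\sum_K \prod 1\big)=|C_t(G)|$ and with $(\sum x_j^t)$. The targeted constant $\frac{t}{\omega}\binom{\omega}{t}^{1/t}|C_t(G)|^{(t-1)/t}$ strongly suggests interpolating: write $t\sum_K\prod_{j\in K}x_j$ as a geometric mean of one factor that is controlled by $\binom{\omega}{t}$ (the per-clique count in a $\omega$-clique) and $t-1$ factors each controlled by $|C_t(G)|$, i.e. bound it by $t\,\big(\text{something}\big)^{1/t}\,|C_t(G)|^{(t-1)/t}$ via Hölder with exponents $(t, \frac{t}{t-1})$ applied to the clique sum.

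The third step is the equality case. Here I would take $G=K_{\omega\times r}$, the complete $\omega$-partite graph with parts of equal size $r$, and exhibit the uniform vector $x_i=(\omega r)^{-1/t}$; by vertex-transitivity this is the Perron vector, so $\rho_t(G)=t\cdot|C_t(G)|\cdot(\omega r)^{-1}$. Then it is a direct computation that $|C_t(G)|=\binom{\omega}{t}r^t$ (choose $t$ of the $\omega$ parts, then one vertex from each), and substituting one checks $\frac{t}{\omega}\binom{\omega}{t}^{1/t}|C_t(G)|^{(t-1)/t}=\frac{t}{\omega}\binom{\omega}{t}^{1/t}\big(\binom{\omega}{t}r^t\big)^{(t-1)/t}=\frac{t}{\omega r}\binom{\omega}{t}r^t=\frac{t|C_t(G)|}{\omega r}=\rho_t(G)$, so equality holds.

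The main obstacle is the second step: choosing the right convexity/Hölder splitting so that the two "exponent pieces" assemble into exactly $\binom{\omega}{t}^{1/t}|C_t(G)|^{(t-1)/t}$ rather than a weaker bound, and verifying that the clique-number constraint $\omega(G)\le\omega$ is used tightly enough (and only through the $\binom{\omega}{t}$ factor) that the complete regular $\omega$-partite graphs saturate it. I expect this to require an extremal argument on the optimal vector $\mathbf{x}$ — in particular, showing that on its support $\mathbf{x}$ behaves as if supported on a disjoint-cliques-like structure — combined with a careful double-counting of incidences between support vertices and $t$-cliques.
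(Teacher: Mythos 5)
Your setup is right and matches the paper's first move: the Rayleigh-quotient formula $\rho_t(G)=t\sum_{K\in C_t(G)}\prod_{j\in K}x_j$ under $\sum_i x_i^t=1$, followed by H\"older with exponents $\bigl(\tfrac{t}{t-1},t\bigr)$, which leaves exactly the factor $\bigl(\sum_{K\in C_t(G)}\prod_{j\in K}x_j^t\bigr)^{1/t}$ to control. Your equality-case computation for the complete regular $\omega$-partite graph is also essentially correct (and once the inequality is proved, you do not even need the Perron-vector/vertex-transitivity claim: the uniform vector gives $\rho_t\ge t|C_t(G)|/n$, which meets the upper bound).

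The genuine gap is the step you yourself flag as the ``main obstacle'': you never produce the bound
\begin{align*}
\sum_{K\in C_t(G)}\prod_{j\in K}x_j^t\ \le\ \binom{\omega}{t}\omega^{-t},
\end{align*}
which is what converts the leftover H\"older factor into $\tfrac{1}{\omega}\binom{\omega}{t}^{1/t}$. This is precisely a higher-order Motzkin--Straus theorem: setting $y_i=x_i^t$, one needs that the maximum of $\sum_{K\in C_t(G)}\prod_{j\in K}y_j$ over the simplex $\{y\ge0,\ \sum_i y_i=1\}$ equals $\binom{\omega}{t}\omega^{-t}$. The paper isolates this as a lemma and proves it by the classical shifting argument (take a maximizer with minimal support; if two support vertices are nonadjacent, move all weight from the one with smaller partial derivative to the other, which does not decrease the objective and shrinks the support, so the support has size $s\le\omega$) and then Maclaurin's inequality on the $s$ support variables, using that $\binom{s}{t}s^{-t}$ is increasing in $s$. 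Your alternative gestures do not substitute for this: the AM--GM step $\prod_{j\in K}x_j\le\frac1t\sum_{j\in K}x_j^t$ bounds the clique sum by $\frac1t\sum_j d_t(j)x_j^t$, which involves local clique degrees and cannot recover the global factor $\binom{\omega}{t}^{1/t}\omega^{-1}$ (it would only see a max-degree-type quantity), and the assertion that every $t$-clique ``sits inside some maximum clique'' is false in general (only inside some maximal clique), so no disjoint-maximum-cliques structure on the support can be assumed without the shifting argument. Until you supply the Motzkin--Straus-type lemma (or an equivalent), the proof is incomplete at its core step.
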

In the case of $t=2$, Theorem \ref{thm main} is a result of Nikiforov \cite{Nikiforov_2002}. 
In Section 3, we will show that our bound tighter than Nikiforov's bound in some classes of graphs and Theorem \ref{thm main} implies that a Tur\'{a}n-type result of Erd\H{o}s \cite{Erdos_1962}. 

\subsection{High-order spectral Tur\'{a}n problem}
Let $\mathcal{F}$ denote a family of $n$-vertex graphs that contain no isolated vertices.
A graph is called $\mathcal{F}$-free if it does not contain any graph from $\mathcal{F}$ as a subgraph.
We denote by $ex(n,\mathcal{F})$ the maximum number of edges in an $\mathcal{F}$-free graph on $n$ vertices.
The study of $ex(n,\mathcal{F})$ is an important topic in extremal graph theory. 
There have been many classical results, such as Mantel's theorem \cite{Mantel_1907}, Tur\'{a}n's theorem \cite{Turan_1941}, Erd\H{o}s-Stone-Simonovits theorem \cite{Erdos_1946,Erdos_1966} and \cite{Furedi_2013} for a survey.

We write $ex_{\rho}(n,\mathcal{F})$ for the maximum spectral radius over all $\mathcal{F}$-free graphs on $n$ vertices.
The problem of determining $ex_{\rho}(n,\mathcal{F})$ is referred to as the spectral version of the Tur\'{a}n problem.
Many classical Tur\'{a}n-type results have been extended to their spectral versions, for example, spectral Mantel's theorem \cite{Nosal_1970}, spectral Tur\'{a}n's theorem \cite{Nikiforov_2007}, spectral Erd\H{o}s-Stone-Bollob\'{a}s theorem \cite{Nikiforov_2009} and for some further results see \cite{Zhai_2022,Cioaba_2023,Peng_2024,Nikiforov_2011}.

In this paper, we consider the high-order spectral Tur\'{a}n problem, which involves determining the maximum $t$-clique spectral radius over all $\mathcal{F}$-free graphs on $n$ vertices. 
In \cite{Liu_2023}, the authors obtained the maximum $r$-clique spectral radius over all $K_{r+1}$-free graphs on $n$ vertices, which is the high-order spectral version of Mantel's theorem. 
In \cite{Liu_2024}, the authors provided an upper bound for $3$-clique spectral radius of a $B_{k}$-free and $K_{2,l}$-free graph on $n$ vertices. 

This paper concentrates on the stability result in the Tur\'{a}n-type problem.
A structural stability theorem obtained by Erd\H{o}s \cite{Erdos_1968} and Simonovits \cite{Simonovits_1968}, which is called Erd\H{o}s-Simonovits stability theorem.
\begin{thm}\textup{\cite{Erdos_1968,Simonovits_1968}}\label{thm Sim}
Let $H$ be a graph with the chromatic number $\chi(H) = r + 1 > 2$. 
For every $\epsilon > 0$, there exist $\delta>0$ and $n_{0}$ such that if $G$ is an $H$-free graph on $n \geq n_{0}$ vertices and $|C_{2}(G)|\geq\left(1-\frac{1}{r}-\delta\right)\frac{n^{2}}{2}$, then $G$ can be obtained from $T_{r} (n)$ by adding and deleting at most 
$\epsilon n^{2}$ edges.
\end{thm}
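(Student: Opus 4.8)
The plan is to deduce Theorem~\ref{thm Sim} from the special case $H=K_{r+1}$ (Tur\'an stability) and then to establish that case by induction on $r$. \emph{Reduction to the complete graph.} Since $\chi(H)=r+1$, the graph $H$ embeds into the complete $(r+1)$-partite graph $K_{r+1}(p)$ with parts of size $p=|V(H)|$, so any $H$-free $G$ is $K_{r+1}(p)$-free. By the Erd\H{o}s--Stone--Simonovits theorem \cite{Erdos_1946,Erdos_1966} this already forces $e(G)=\left(1-\frac1r+o(1)\right)\frac{n^2}{2}$, so the hypothesis $|C_2(G)|\ge\left(1-\frac1r-\delta\right)\frac{n^2}{2}$ places $G$ within $\delta n^2/2$ of the extremal edge count. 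A $K_{r+1}(p)$-free graph contains only $o(n^{r+1})$ copies of $K_{r+1}$ (otherwise supersaturation would produce a $K_{r+1}(p)$), so by the $K_{r+1}$-removal lemma one may delete $o(n^2)$ edges to obtain a genuinely $K_{r+1}$-free graph $G^{\ast}$ that is still within $o(n^2)+\delta n^2/2$ edges of extremal. It therefore suffices to prove the statement for $K_{r+1}$-free graphs and transport the conclusion back across the $o(n^2)$ deleted edges.

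\emph{Inductive core.} I would prove Tur\'an stability by induction on $r$, the base case $r=1$ being the trivial empty graph (and, for the general $H$, the K\H{o}v\'ari--S\'os--Tur\'an bound $ex(n,H)=o(n^2)$ for bipartite $H$). For the inductive step, first run a \emph{cleaning} step: iteratively discard vertices of degree below $\left(1-\frac1r-\eta\right)n$ to reach an induced subgraph $G'$ on $n'\ge(1-o_\delta(1))n$ vertices with $\delta(G')\ge\left(1-\frac1r-o_\delta(1)\right)n'$, losing only $o_\delta(n^2)$ edges and preserving $K_{r+1}$-freeness. Now fix any $v\in V(G')$ and consider $N=N_{G'}(v)$, of size at least $\left(1-\frac1r-o(1)\right)n'$. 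Because $G'$ is $K_{r+1}$-free, $G'[N]$ is $K_r$-free, and a short degree computation gives $\delta(G'[N])\ge\left(1-\frac1{r-1}-o(1)\right)|N|$, so $G'[N]$ is near-extremal for $K_r$. By the induction hypothesis $G'[N]$ is within $\epsilon'|N|^2$ edges of $T_{r-1}(|N|)$, i.e.\ $N$ admits a partition into $r-1$ classes that are almost independent inside and almost complete between.

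\emph{Globalization and balancing.} The remaining task is to promote this local $(r-1)$-partition of $N$ to a global $r$-partition of $G'$, with the non-neighbours of $v$ seeding the $r$-th class, and to bound the total number of misplaced edges by $\epsilon n^2$. The argument here is a majority/supersaturation one: any positive density of vertices attaching to the classes differently from a vertex of $T_r(n)$ would either create a forbidden $K_{r+1}$ (hence, after blow-up, a $K_{r+1}(p)\supseteq H$) or force an edge deficit contradicting $|C_2(G)|\ge\left(1-\frac1r-\delta\right)\frac{n^2}{2}$; choosing $\delta$ small and $n_0$ large keeps the exceptional vertices to $o(n)$ and the misplaced edges to at most $\epsilon n^2$. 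Finally, near-extremality forces the $r$ classes to be nearly balanced, since an unbalanced complete $r$-partite graph loses $\Omega(n^2)$ edges relative to $T_r(n)$; this upgrades ``close to some complete $r$-partite graph'' to ``within $\epsilon n^2$ of $T_r(n)$'', as required.

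\emph{Main obstacle.} I expect the crux to be this globalization step: transferring the structure found on a single neighbourhood to a partition of the whole vertex set while keeping the accumulated error below $\epsilon n^2$. This is precisely where the extremal hypothesis must be used quantitatively, through supersaturation, to rule out a linear number of misbehaving vertices, and where the dependence between $\delta$, the inductive tolerance $\epsilon'$, and the final $\epsilon$ has to be tracked with care.
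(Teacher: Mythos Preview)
The paper does not prove Theorem~\ref{thm Sim}; it is quoted as the classical Erd\H{o}s--Simonovits stability theorem with citations \cite{Erdos_1968,Simonovits_1968} and is used as a black box in the proof of Theorem~\ref{thm1}. There is therefore no ``paper's own proof'' to compare your proposal against.

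Your outline is a standard modern route to stability (removal lemma to reduce to $K_{r+1}$-free, minimum-degree cleaning, induction on a neighbourhood, then globalization and balancing), and it is broadly sound as a sketch. If the intent of the exercise was to supply a proof the paper omits, you should be explicit that you are proving a cited result rather than reconstructing something the authors did. If instead the task was to match the paper, the correct answer is simply that the paper provides no proof of this statement.
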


In 2009, Nikiforov \cite{Nikiforov_2009_1} proved a spectral version of Erd\H{o}s-Simonovits stability theorem.
\begin{thm}\textup{\cite{Nikiforov_2009_1}}\label{thm Nikiforov}
Let $H$ be a graph with $\chi(H) = r + 1 > 2$. 
For every $\epsilon > 0$, there exist $\delta>0$ and $n_{0}$ such that if $G$ is an $H$-free graph on $n \geq n_{0}$ vertices and $\rho(G)\geq\left(1-\frac{1}{r}-\delta\right)n$, then $G$ can be obtained from $T_{r} (n)$ by adding and deleting at most  
$\epsilon n^{2}$ edges.
\end{thm}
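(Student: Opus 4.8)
The plan is to derive the spectral statement (Theorem~\ref{thm Nikiforov}) from its purely combinatorial counterpart, the Erd\H{o}s--Simonovits stability theorem (Theorem~\ref{thm Sim}), which I am entitled to assume. The two conclusions are \emph{identical}: both assert that $G$ can be obtained from $T_{r}(n)$ by adding and deleting at most $\epsilon n^{2}$ edges. Moreover Theorem~\ref{thm Sim} delivers exactly this conclusion as soon as one knows that $G$ has almost the Tur\'{a}n number of edges, namely $|C_{2}(G)|\geq\left(1-\frac{1}{r}-\delta'\right)\frac{n^{2}}{2}$. Hence the entire problem collapses to one implication: for an $H$-free graph $G$ on $n\geq n_{0}$ vertices, the spectral hypothesis $\rho(G)\geq\left(1-\frac{1}{r}-\delta\right)n$ must force this edge lower bound, with $\delta'\to0$ as $\delta\to0$ and $n\to\infty$. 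Given such a reduction, the $\epsilon$-management is routine: fix $\epsilon>0$, feed it to Theorem~\ref{thm Sim} to extract its $\delta'$, then choose $\delta$ small and $n_{0}$ large so that the spectral hypothesis yields $|C_{2}(G)|\geq\left(1-\frac{1}{r}-\delta'\right)\frac{n^{2}}{2}$, and apply Theorem~\ref{thm Sim}.

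The heart of the matter is therefore the spectral-to-edge bound. The naive inequality $\rho(G)^{2}\leq 2|C_{2}(G)|$ is useless here, since it only gives $|C_{2}(G)|\gtrsim\frac{1}{2}\left(1-\frac{1}{r}\right)^{2}n^{2}$, far below the Tur\'{a}n threshold. A sharper attempt is the $t=2$ case of Theorem~\ref{thm main}, $\rho(G)\leq\sqrt{2|C_{2}(G)|\left(1-\frac{1}{\omega}\right)}$, combined with $\omega\leq|V(H)|-1$ for $H$-free $G$; one checks that this settles the case $H=K_{r+1}$ (where $\omega\leq r$), but for larger $H$ the clique bound is too weak, because it sees only the clique number and not the full forbidden structure. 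To handle general $H$ one must exploit that $G$ is not merely $K_{|V(H)|}$-free but $K_{r+1}(s,\dots,s)$-free for $s=|V(H)|$: since $\chi(H)=r+1$, the graph $H$ embeds in the balanced complete $(r+1)$-partite blow-up, so $H$-freeness forbids that blow-up. The mechanism I would then invoke is the spectral Erd\H{o}s--Stone--Bollob\'{a}s phenomenon~\cite{Nikiforov_2009}: a graph with $\rho(G)\geq\left(1-\frac{1}{r}-\delta\right)n$ whose edge count is deficient necessarily concentrates its edges into a dense spot that contains $K_{r+1}(s,\dots,s)$, hence $H$ --- a contradiction. Thus $H$-freeness forces $|C_{2}(G)|\geq\left(1-\frac{1}{r}-\delta'\right)\frac{n^{2}}{2}$.

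The main obstacle is exactly this spectral supersaturation step: turning ``large $\rho(G)$ together with a deficient edge count'' into a forbidden blow-up, while tracking the error terms so that $\delta'\to0$ as $\delta\to0$. The route I would pursue is to show that the Perron eigenvector is nearly flat, with maximum entry $O(1/\sqrt{n})$, so that $\rho(G)\approx 2|C_{2}(G)|/n$ and the edge bound follows immediately. The danger --- and the genuine difficulty --- is that a low-degree or ``spiky'' vertex could a priori carry a disproportionate share of the Perron weight if its few neighbours have very large eigenvector entries, so one cannot bound entries by degree alone. Ruling this out requires the near-extremality of $\rho(G)$: because $G$ is $K_{r+1}(s,\dots,s)$-free, the spectral Erd\H{o}s--Stone--Bollob\'{a}s bound pins $\rho(G)\leq\left(1-\frac{1}{r}+o(1)\right)n$ from above, which forbids precisely the dense configurations that would let the eigenvector concentrate. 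Combining this upper pinning with the lower hypothesis is what forces the eigenvector to spread out; making the spreading quantitative is the technical core. Once the edge bound is in hand, Theorem~\ref{thm Sim} closes the argument.
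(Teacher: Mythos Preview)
Your overall framework is right and matches the paper: reduce to the Erd\H{o}s--Simonovits stability theorem (Theorem~\ref{thm Sim}) by extracting an edge lower bound $|C_{2}(G)|\geq\bigl(1-\tfrac{1}{r}-\delta'\bigr)\tfrac{n^{2}}{2}$ from the spectral hypothesis. You also correctly observe that when $H=K_{r+1}$ this is immediate from the $t=2$ case of Theorem~\ref{thm main}, since then $\omega(G)\leq r$.

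Where you diverge from the paper is in handling general $H$. You propose a direct eigenvector-flatness / spectral supersaturation argument and flag it as the hard technical core, leaving it unfinished. The paper (in the $t=2$ specialisation of its proof of Theorem~\ref{thm1}) sidesteps this entirely with a much softer device you have overlooked: the removal lemma (Lemma~\ref{Erdos}). Since $\chi(H)=r+1$, one can delete fewer than $\epsilon' n^{2}$ edges from $G$ to obtain a $K_{r+1}$-free graph $G'$. The deleted edges form a graph $\hat G$ with $|C_{2}(\hat G)|<\epsilon' n^{2}$, so $\rho(\hat G)\leq\sqrt{2\epsilon'}\,n$; combined with $\rho(G)\leq\rho(G')+\rho(\hat G)$ (which follows from Lemma~\ref{lem Qi}), the spectral hypothesis survives for $G'$ with a slightly worse constant. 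Now $G'$ \emph{is} $K_{r+1}$-free, so the very argument you said ``settles the case $H=K_{r+1}$'' applies to $G'$ and yields the edge bound for $G'$. Theorem~\ref{thm Sim} then places $G'$ within $\epsilon'' n^{2}$ edges of $T_{r}(n)$, and since $G$ and $G'$ differ by at most $\epsilon' n^{2}$ edges, the conclusion holds for $G$.

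In short: you already had the $K_{r+1}$ case in hand; the missing idea is simply to reduce general $H$ to that case via the removal lemma, rather than attacking the Perron vector directly.
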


In this paper, we extend Theorem \ref{thm Nikiforov} to the $t$-clique spectral version based on Theorem \ref{thm main}.

\begin{thm}\label{thm1}
Let $H$ be a graph with $\chi(H) = r + 1 > t \geq 2$. 
For every $\epsilon > 0$, there exist $\delta>0$ and $n_{0}$ such that if $G$ is an $H$-free graph on $n \geq n_{0}$ vertices and $\rho_{t}(G)\geq\left(\binom{r-1}{t-1}\left(\frac{1}{r}\right)^{t-1}-\delta\right)n^{t-1}$, then $G$ can be obtained from $T_{r} (n)$ by adding and deleting at most  
$\epsilon n^{2}$ edges.
\end{thm}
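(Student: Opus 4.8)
The plan is to deduce Theorem \ref{thm1} from Nikiforov's spectral stability theorem (Theorem \ref{thm Nikiforov}) by converting the hypothesis on $\rho_{t}(G)$ into the hypothesis on the ordinary spectral radius $\rho(G)=\rho_{2}(G)$ needed to invoke it. The guiding observation is that the stated threshold is exactly the leading term of $\rho_{t}(T_{r}(n))$: since $T_{r}(n)$ is complete regular $r$-partite with $|C_{t}(T_{r}(n))|=\binom{r}{t}(n/r)^{t}$, the equality case of Theorem \ref{thm main} gives $\rho_{t}(T_{r}(n))=\binom{r-1}{t-1}(1/r)^{t-1}n^{t-1}$, which plays the role of $\rho(T_{r}(n))=(1-1/r)n$ in Theorem \ref{thm Nikiforov}.

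First I would prove that for every $H$-free graph $G$ on $n$ vertices (so $\chi(H)=r+1>t$) one has
\[
\rho_{t}(G)\ \le\ \binom{\omega-1}{t-1}\left(\frac{n\,\rho(G)}{\omega(\omega-1)}\right)^{(t-1)/2}\!(1+o(1)),\qquad \omega=\omega(G),
\]
and that this is asymptotically sharp for $G=T_{r}(n)$. This is obtained by composing: Theorem \ref{thm main} for the $t$-clique tensor, $\rho_{t}(G)\le\tfrac{t}{\omega}\binom{\omega}{t}^{1/t}|C_{t}(G)|^{(t-1)/t}$; a Kruskal--Katona--Maclaurin type inequality $|C_{t}(G)|\le\binom{\omega}{t}\bigl(|C_{2}(G)|/\binom{\omega}{2}\bigr)^{t/2}$, valid because $G$ has clique number $\omega$ (it reduces to Maclaurin's inequality on complete multipartite graphs); and the average-degree bound $2|C_{2}(G)|\le n\rho(G)$; the algebra collapses the coefficient to $\tfrac t\omega\binom\omega t=\binom{\omega-1}{t-1}$. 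Alongside this I would record two facts about the clique number: $\omega(G)\le |V(H)|-1$ (otherwise $G\supseteq K_{|V(H)|}\supseteq H$), so $\omega$ ranges over a bounded set; and $\omega(G)\ge r$ once $\delta$ is small, because if $\omega\le r-1$ then $G$ is $K_{r}$-free and Theorem \ref{thm main} together with the analogous computation for $T_{r-1}(n)$ (or Zykov's bound $|C_{t}(G)|\le|C_{t}(T_{r-1}(n))|$) forces $\rho_{t}(G)\le\binom{r-2}{t-1}(1/(r-1))^{t-1}n^{t-1}$, which by an AM--GM comparison $(r-t)r^{t-1}<(r-1)^{t}$ is strictly below the threshold.

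If $\omega(G)=r$, plugging $\omega=r$ into the displayed inequality and inverting gives $\rho(G)\ge(1-1/r-\delta')n$ with $\delta'\to0$ as $\delta\to0$, and Theorem \ref{thm Nikiforov} completes the proof. The main obstacle is the intermediate regime $r<\omega(G)\le|V(H)|-1$: there the right-hand side of the displayed inequality strictly exceeds the threshold (its value grows with $\omega$), so the crude bound $\omega(G)\le|V(H)|-1$ is not enough, and moreover one cannot conclude that $|C_{t}(G)|$ is close to $|C_{t}(T_{r}(n))|$ from Theorem \ref{thm main} alone. To finish this case one must use the $H$-freeness beyond the clique-number bound: concretely, that an $H$-free graph containing a $K_{r+1}$ cannot have $t$-clique count — hence, via Theorem \ref{thm main}, $t$-clique spectral radius — as large as $\rho_{t}(T_{r}(n))$ unless it is already within $\epsilon n^{2}$ edges of $T_{r}(n)$. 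This is the $t$-clique analogue of Theorem \ref{thm Sim} (a clique-density stability statement for $H$-free graphs), which I would derive from the Erd\H{o}s--Stone bound $|C_{2}(G)|\le(1-1/r+o(1))\binom{n}{2}$, a supersaturation argument for copies of $H$, and the Erd\H{o}s--Simonovits stability theorem applied to an auxiliary graph. Securing this statement uniformly over the (finite) range of admissible $\omega$ is the step I expect to require the most care.
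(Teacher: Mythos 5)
Your reduction works cleanly only when $\omega(G)=r$: there the chain (Theorem \ref{thm main}, the S\'os--Straus-type inequality, $2|C_{2}(G)|\le n\rho(G)$) does give $\rho(G)\ge(1-\frac1r-\delta')n$ and Theorem \ref{thm Nikiforov} applies, and your AM--GM comparison $(r-t)r^{t-1}<(r-1)^{t}$ correctly excludes $\omega\le r-1$. But the case $r<\omega(G)\le|V(H)|-1$, which you yourself flag as the main obstacle, is a genuine gap, and the fix you sketch cannot close it. The difficulty is quantitative: the coefficient $\frac{t}{\omega}\binom{\omega}{t}^{1/t}$ in Theorem \ref{thm main} is increasing in $\omega$, so when $\omega>r$ the hypothesis $\rho_{t}(G)\ge\bigl(\binom{r-1}{t-1}r^{-(t-1)}-\delta\bigr)n^{t-1}$ only yields $|C_{t}(G)|\ge c\,\binom{r}{t}(n/r)^{t}$ for a constant $c<1$ (e.g. $t=3$, $r=3$, $\omega=4$ gives roughly $c\approx0.77$), which is far too weak for any clique-density stability statement, since those require the count to be within $o(n^{t})$ of the extremal value; and in the reverse direction, an upper bound $|C_{t}(G)|\le(1+o(1))|C_{t}(T_{r}(n))|$ from supersaturation does not force $\rho_{t}(G)$ below the threshold ``via Theorem \ref{thm main}'' for the same reason -- the bound one gets has the larger $\omega$-dependent coefficient. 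So the hard case is not a matter of ``care over a finite range of $\omega$''; as sketched, the argument simply does not connect the spectral hypothesis to any usable count hypothesis there.

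The paper's proof avoids this case split entirely, and its key device is exactly what your proposal is missing: apply the Erd\H{o}s--Frankl--R\"odl removal lemma (Lemma \ref{Erdos}) to delete fewer than $\epsilon' n^{2}$ edges and obtain a $K_{r+1}$-free graph $G'$, and then show that this deletion costs little at the spectral level. Concretely, splitting the eigenvector sum over $C_{t}(G)=C_{t}(G')\cup\bigl(C_{t}(G)\setminus C_{t}(G')\bigr)$ and using the variational characterization (Lemma \ref{lem Qi}) gives $\rho_{t}(G)\le\rho_{t}(G')+\rho_{t}(\hat G)$, and Theorem \ref{thm main} applied to the at most $\epsilon' n^{2}\binom{n-2}{t-2}$ destroyed $t$-cliques gives $\rho_{t}(\hat G)\le\epsilon n^{t-1}$, hence $\rho_{t}(G')\ge\rho_{t}(G)-\epsilon n^{t-1}$. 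After that the argument lives entirely in the $K_{r+1}$-free world, where Theorem \ref{thm main} (with $\omega\le r$) and Lemma \ref{lem Sos} convert the spectral bound into $|C_{2}(G')|\ge(1-\frac1r-\delta')\frac{n^{2}}{2}$, and the classical Erd\H{o}s--Simonovits stability theorem (Theorem \ref{thm Sim}) -- not its spectral version -- finishes; adding back the removed edges gives the conclusion for $G$. To repair your write-up you would either need to import this ``remove few edges, lose at most $\epsilon n^{t-1}$ of $\rho_{t}$'' step, or supply a genuinely new argument for $H$-free graphs with $\omega>r$, which you currently do not have.
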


The remainder of this paper is organized as follows: 
In Section 2, we introduce some definitions and lemmas required for the proofs. 
In Section 3, we present proofs of the conclusions of this paper along with some remarks.

\section{Preliminaries}

For an order $m$ dimensional $n$ complex tensor $\mathcal{A}=(a_{i_{1}i_{2}\cdots i_{m}})$, if there exist a complex number $\lambda$ and a nonzero complex vector $x=(x_{1},\ldots,x_{n})^{T}$ satisfy
\begin{align}\label{equ1}
\lambda x_{i}^{m-1}=\sum_{i_{2},\ldots,i_{m}=1}^{n}a_{ii_{2}\cdots i_{m}}x_{i_{2}}\cdots x_{i_{m}} \ (i=1,2,\cdots,n),
\end{align}
then $\lambda$ is called an \emph{eigenvalue} of $\mathcal{A}$ and $x$ is called an \emph{eigenvector} of $\mathcal{A}$ associated with $\lambda$ (refer to \cite{Qi_2005} for further details). 
A tensor $\mathcal{A}$ is termed symmetric if its entries remain invariant under any permutation of their indices. 
Furthermore, if all entries of a tensor $\mathcal{A}$ are nonnegative, then $\mathcal{A}$ is referred to as a nonnegative tensor.

\begin{lem}\textup{\cite{Qi_2013}}\label{lem Qi}
Let $\mathcal{A}=(a_{i_{1}i_{2}\cdots i_{m}})$ be an order $m$ dimension $n$ symmetric nonnegative tensor. 
The spectral radius of $\mathcal{A}$ is equal to
\begin{align*}
\max\left\{\sum_{i_{1},i_{2},\cdots,i_{m}=1}^{n}a_{i_{1}\cdots i_{m}}x_{i_{1}}\cdots x_{i_{m}}:\sum_{i=1}^{n}x_{i}^{m}=1,(x_{1},x_{2},\cdots,x_{n})^{T}\in \mathbb{R}_{+}^{n}\right\},
\end{align*}
where $\mathbb{R}_{+}^{n}$ denotes the set of all $n$-dimensional vectors with nonnegative components.
\end{lem}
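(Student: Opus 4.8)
The plan is to recognize the right-hand side as the maximum of the Rayleigh-type form $R(x)=\sum_{i_1,\ldots,i_m=1}^{n}a_{i_1\cdots i_m}x_{i_1}\cdots x_{i_m}$ over the compact set $S=\{x\in\mathbb{R}_+^n:\sum_{i=1}^n x_i^m=1\}$, and to show this maximum equals $\rho(\mathcal{A})$. Since $R$ is continuous and $S$ is compact, the maximum is attained at some $x^\ast\in S$; write $\lambda^\ast=R(x^\ast)$. Because every $a_{i_1\cdots i_m}\ge 0$ and every coordinate of a point of $S$ is nonnegative, $\lambda^\ast\ge 0$. It then suffices to prove the two inequalities $\lambda^\ast\ge\rho(\mathcal{A})$ and $\lambda^\ast\le\rho(\mathcal{A})$.

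For the inequality $\lambda^\ast\ge\rho(\mathcal{A})$ I would invoke the Perron--Frobenius theorem for nonnegative tensors, which guarantees that $\rho(\mathcal{A})$ is itself an eigenvalue of $\mathcal{A}$ with a nonnegative eigenvector $y$; after rescaling we may assume $y\in S$. Substituting $y$ into $R$ and grouping the summation so that the eigenequation (\ref{equ1}) applies to the inner sum gives $R(y)=\sum_{i=1}^n y_i\big(\sum_{i_2,\ldots,i_m=1}^n a_{ii_2\cdots i_m}y_{i_2}\cdots y_{i_m}\big)=\rho(\mathcal{A})\sum_{i=1}^n y_i^m=\rho(\mathcal{A})$. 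Since $y\in S$, the maximum $\lambda^\ast$ is at least $R(y)=\rho(\mathcal{A})$.

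For the reverse inequality $\lambda^\ast\le\rho(\mathcal{A})$ I would analyse the maximizer $x^\ast$ by Lagrange multipliers. When $x^\ast$ lies in the interior (all coordinates positive), stationarity of $R$ subject to $\sum_i x_i^m=1$ yields, using the symmetry of $\mathcal{A}$ so that each of the $m$ index positions contributes equally, the relation $\sum_{i_2,\ldots,i_m=1}^n a_{ii_2\cdots i_m}x^\ast_{i_2}\cdots x^\ast_{i_m}=\mu (x^\ast_i)^{m-1}$ for a multiplier $\mu$; multiplying by $x^\ast_i$, summing over $i$, and using $\sum_i (x^\ast_i)^m=1$ identifies $\mu=R(x^\ast)=\lambda^\ast$. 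Thus $x^\ast$ is an eigenvector with eigenvalue $\lambda^\ast$, so $\lambda^\ast=|\lambda^\ast|\le\rho(\mathcal{A})$ by the definition of the spectral radius as the maximum modulus of the eigenvalues.

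The main obstacle is the boundary case, where $x^\ast$ has a vanishing coordinate and Lagrange stationarity need not reproduce the eigenequation. To handle it I would perturb $\mathcal{A}$ to the positive (hence irreducible) symmetric tensor $\mathcal{A}_\epsilon=\mathcal{A}+\epsilon\mathcal{J}$, where $\mathcal{J}$ has all entries equal to $1$. For $\mathcal{A}_\epsilon$ the maximizer of the corresponding form $R_\epsilon$ is forced into the interior---any zero coordinate could be increased slightly to raise $R_\epsilon$ while changing the constraint only to higher order---so the interior argument applies verbatim and shows that $\max_S R_\epsilon$ is an eigenvalue of $\mathcal{A}_\epsilon$, whence $\max_S R_\epsilon\le\rho(\mathcal{A}_\epsilon)$. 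Letting $\epsilon\to 0^+$ and using the uniform convergence $R_\epsilon\to R$ on $S$ together with the continuity of the tensor spectral radius under perturbation then yields $\lambda^\ast\le\rho(\mathcal{A})$, which combined with the previous paragraph completes the proof. The genuinely nontrivial inputs, which I would cite rather than reprove, are the Perron--Frobenius existence of a nonnegative eigenvector attaining $\rho(\mathcal{A})$ and the continuity of $\rho$ in the entries of the tensor.
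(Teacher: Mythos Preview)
The paper does not prove this lemma at all; it is quoted from \cite{Qi_2013} and used as a black box, so there is no proof in the paper to compare against. Your outline is a correct and standard route to the result, and the two external facts you flag (Perron--Frobenius for nonnegative tensors and continuity of $\rho$) are indeed the substantive inputs.

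One simplification worth noting: the boundary case does not actually require perturbation. At a maximizer $x^{\ast}\in S$ with $x^{\ast}_i=0$, the KKT inequality for the constraint $x_i\ge 0$ gives $\partial R/\partial x_i\le \mu\, m(x^{\ast}_i)^{m-1}=0$, while nonnegativity of all entries of $\mathcal{A}$ and of $x^{\ast}$ forces $\partial R/\partial x_i\ge 0$. Hence $\partial R/\partial x_i=0$, and since $(x^{\ast}_i)^{m-1}=0$ as well, the eigenequation at index $i$ reads $0=\lambda^{\ast}\cdot 0$ and is satisfied automatically. Thus $x^{\ast}$ is an eigenvector for $\lambda^{\ast}$ in all cases, and the appeal to $\mathcal{A}_\epsilon$ and to continuity of the spectral radius can be dropped entirely.
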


\begin{lem}\textup{\cite{Liu_2023}}\label{lem inequation of clique}
For a graph $G$ on $n$ vertices, we have
\begin{align}\label{equ p-spectral radio 1}
|C_{t}(G)|\leq\frac{n}{t}\rho_{t}(G).
\end{align}
Furthermore, if the number of $t$-cliques containing vertex $i$ is the same for all $i\in V(G)$, then equality holds in \textup{(\ref{equ p-spectral radio 1})}.
\end{lem}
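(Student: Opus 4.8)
The plan is to prove the inequality by substituting the uniform vector into the variational formula of Lemma~\ref{lem Qi}, and to settle the equality case by supplying a matching upper bound via the AM--GM inequality. I would apply Lemma~\ref{lem Qi} to $\mathcal{A}(G)$, which is symmetric and nonnegative of order $t$ and dimension $n$, using the test vector $x=(n^{-1/t},\ldots,n^{-1/t})^{T}$, which satisfies $\sum_{i=1}^{n}x_i^{t}=1$. This gives $\rho_t(G)\ge n^{-1}\sum_{i_1,\ldots,i_t}a_{i_1\cdots i_t}$. The key counting step is that each unordered $t$-clique is realized by its $t!$ orderings, each carrying the entry $\frac{1}{(t-1)!}$, so $\sum_{i_1,\ldots,i_t}a_{i_1\cdots i_t}=\frac{t!}{(t-1)!}|C_t(G)|=t\,|C_t(G)|$. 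Hence $\rho_t(G)\ge \frac{t}{n}|C_t(G)|$, which rearranges into the claimed bound.

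For the equality statement, let $d_i$ denote the number of $t$-cliques containing vertex $i$. Double counting gives $\sum_{i=1}^{n}d_i=t\,|C_t(G)|$, so the hypothesis $d_i=d$ for every $i$ forces $d=\frac{t}{n}|C_t(G)|$, and the bound above already reads $\rho_t(G)\ge d$. It remains to prove $\rho_t(G)\le d$. Here I would take an arbitrary nonnegative unit vector $x$ and group the objective in Lemma~\ref{lem Qi} by unordered cliques, so that each clique $S$ contributes $t\prod_{v\in S}x_v$; the AM--GM inequality $\prod_{v\in S}x_v\le \frac{1}{t}\sum_{v\in S}x_v^{t}$ then bounds this contribution by $\sum_{v\in S}x_v^{t}$. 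Summing over all cliques and reorganizing the result by vertices turns the total into $\sum_{i=1}^{n}d_i x_i^{t}=d\sum_{i=1}^{n}x_i^{t}=d$. Thus $\rho_t(G)\le d$ by Lemma~\ref{lem Qi}, and equality holds throughout.

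I expect the only real obstacle to be the combinatorial bookkeeping: one must track how the normalization $\frac{1}{(t-1)!}$ interacts with the $t!$ orderings of each clique so that $|C_t(G)|$ and the quantities $d_i$ emerge cleanly, and one must verify that the AM--GM grouping accounts for every ordered tuple exactly once. Beyond this I anticipate no analytic difficulty, since the variational characterization in Lemma~\ref{lem Qi} furnishes both the lower bound through a single test vector and the upper bound through a bound valid for all test vectors, so that no appeal to the full Perron--Frobenius theory for nonnegative tensors is required.
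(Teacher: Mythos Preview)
Your argument is correct. The inequality follows from Lemma~\ref{lem Qi} applied to the uniform test vector, and the counting $\sum_{i_1,\ldots,i_t}a_{i_1\cdots i_t}=t|C_t(G)|$ is exactly right. For the equality case, your AM--GM upper bound $\rho_t(G)\le\sum_i d_i x_i^t=d$ is valid for every admissible $x$, so together with the lower bound you get $\rho_t(G)=d=\frac{t}{n}|C_t(G)|$.

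As for comparison: the present paper does not prove this lemma at all; it is quoted from \cite{Liu_2023} in the Preliminaries section and used as a black box. So there is no ``paper's own proof'' to set yours against. Your proof is self-contained and relies only on Lemma~\ref{lem Qi}, which is already available in the paper, so it would slot in cleanly if one wished to make the exposition independent of \cite{Liu_2023} on this point.
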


\begin{lem} \label{Erdos} \textup{\cite{Erdos}}
Let $\epsilon$ be an arbitrary positive number, and let $G$ be an $H$-free graph on $n$ vertices. 
There exists a positive number  $n_{0}$ such that for $n > n_{0}$, one can remove less than $\epsilon n^{2}$ edges from $G$ so that the remaining graph is $K_{r+1}$-free, where $r+1=\chi(H)$.
\end{lem}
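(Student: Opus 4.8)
The plan is to deduce this cleaning lemma from two classical ingredients: a supersaturation estimate showing that an $H$-free graph contains only $o(n^{r+1})$ copies of $K_{r+1}$, and the $K_{r+1}$-removal lemma, which converts ``few copies'' into ``few deleted edges.'' The bridge between the hypothesis ($H$-free) and a bound on the clique count rests on one observation: since $\chi(H)=r+1$, the graph $H$ embeds into the complete $(r+1)$-partite graph $K_{r+1}(s)$ with every part of size $s=|V(H)|$ (color $V(H)$ with $r+1$ colors and pad each class to size $s$). Hence any graph containing $K_{r+1}(s)$ contains $H$, so an $H$-free graph $G$ is automatically $K_{r+1}(s)$-free.

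First I would bound the number of copies of $K_{r+1}$ in $G$. Form the $(r+1)$-uniform hypergraph $\mathcal{H}$ whose hyperedges are the vertex sets of the $K_{r+1}$'s of $G$. A complete $(r+1)$-partite sub-hypergraph $K^{(r+1)}_{s,\dots,s}$ of $\mathcal{H}$ consists of pairwise disjoint sets $V_{1},\dots,V_{r+1}$, each of size $s$, such that every transversal is a hyperedge; taking a transversal through any prescribed pair $u\in V_{i}$, $v\in V_{j}$ with $i\ne j$ shows $uv\in E(G)$, so such a sub-hypergraph is exactly a copy of $K_{r+1}(s)$ in $G$. By the Erd\H{o}s theorem on complete multipartite hypergraphs, every $(r+1)$-uniform hypergraph on $n$ vertices with at least $n^{\,r+1-c}$ hyperedges, for a suitable $c=c(s)>0$, contains $K^{(r+1)}_{s,\dots,s}$ once $n$ is large. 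Since $n^{\,r+1-c}=o(n^{r+1})$, for every fixed $\gamma>0$ and all sufficiently large $n$ a count of at least $\gamma n^{r+1}$ copies of $K_{r+1}$ would force $K_{r+1}(s)$ and hence $H$; as $G$ is $H$-free, $G$ has fewer than $\gamma n^{r+1}$ copies of $K_{r+1}$, and $\gamma$ may be taken arbitrarily small.

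Next I would invoke the $K_{r+1}$-removal lemma: for every $\epsilon>0$ there is $\gamma>0$ such that any $n$-vertex graph with fewer than $\gamma n^{r+1}$ copies of $K_{r+1}$ can be made $K_{r+1}$-free by deleting fewer than $\epsilon n^{2}$ edges. Given the target $\epsilon$, I would first fix this $\gamma$, then choose $n_{0}$ from the supersaturation step so that for $n>n_{0}$ the $H$-free graph $G$ has fewer than $\gamma n^{r+1}$ copies of $K_{r+1}$, and finally apply the removal lemma to delete fewer than $\epsilon n^{2}$ edges and destroy every $K_{r+1}$. I expect the main obstacle to be the removal lemma itself: its standard proof rests on Szemer\'{e}di's regularity lemma (a regularity-plus-counting argument showing that a graph admitting no $\epsilon n^{2}$-edge clique cover must contain $\Omega(n^{r+1})$ copies of $K_{r+1}$), and matching the two quantifiers $\gamma$ and $n_{0}$ between the supersaturation and removal steps is the delicate bookkeeping. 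By contrast, the embedding $H\subseteq K_{r+1}(s)$ and the hypergraph counting are comparatively routine.
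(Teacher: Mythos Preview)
The paper does not prove this lemma at all: it is stated as a preliminary result quoted from Erd\H{o}s--Frankl--R\"odl \cite{Erdos}, with no argument given. So there is no ``paper's own proof'' to compare against.

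Your proposal is correct and is in fact the standard route to this result (and essentially the one in the cited reference): embed $H$ into the blow-up $K_{r+1}(s)$ via a proper $(r+1)$-colouring, use Erd\H{o}s's hypergraph extremal bound to show that too many copies of $K_{r+1}$ would force a complete $(r+1)$-partite $K^{(r+1)}_{s,\dots,s}$ in the clique hypergraph and hence $K_{r+1}(s)\supseteq H$ in $G$, and then apply the $K_{r+1}$-removal lemma. One small wording issue: the configuration $V_1,\dots,V_{r+1}$ need not be \emph{exactly} $K_{r+1}(s)$ in $G$ (there may be extra edges inside the parts); what you actually get, and what suffices, is that $G$ contains $K_{r+1}(s)$ as a subgraph. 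The quantifier bookkeeping you describe (fix $\gamma$ from the removal lemma first, then pick $n_0$ from the supersaturation step) is the right order.
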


\begin{lem}\textup{\cite{Sos_1982}}\label{lem Sos}
Let $G$ be an $n$-vertex $K_{r+1}$-free graph and let $r\geq t\geq s\geq1$. 
Then
\begin{align*}
\left(\frac{|C_{t}(G)|}{\binom{r}{t}}\right)^{\frac{1}{t}}\leq\left(\frac{|C_{s}(G)|}{\binom{r}{s}}\right)^{\frac{1}{s}}.
\end{align*}
\end{lem}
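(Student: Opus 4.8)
The plan is to deduce the theorem from the combinatorial Erd\H{o}s--Simonovits stability theorem (Theorem~\ref{thm Sim}) by showing that the spectral hypothesis forces $G$ to contain almost as many edges as the Tur\'an graph, namely $|C_2(G)|\ge\left(1-\frac1r-\delta_0\right)\frac{n^2}{2}$. Accordingly, given $\epsilon>0$ I would first apply Theorem~\ref{thm Sim} to the forbidden graph $H$ and the tolerance $\epsilon$ to obtain the associated $\delta_0$ and threshold $n_1$; it then remains to choose a spectral slack $\delta$, a removal parameter $\epsilon_1$, and $n_0\ge n_1$ so that the hypothesis on $\rho_t(G)$ implies this edge bound. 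A guiding observation is that the normalizing constant $\binom{r-1}{t-1}(1/r)^{t-1}$ equals $n^{1-t}\rho_t(T_r(n))$ (for $r\mid n$, by the equality case of Theorem~\ref{thm main} applied to the complete regular $r$-partite graph), so the hypothesis says $\rho_t(G)$ lies within $\delta n^{t-1}$ of the extremal value.

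Since $G$ is only $H$-free, it may contain copies of $K_{r+1}$, and its clique number is a priori bounded only by $|V(H)|$. This matters because the coefficient $\frac{t}{\omega}\binom{\omega}{t}^{1/t}$ in Theorem~\ref{thm main} is increasing in $\omega$, so applying that bound to $G$ directly is too lossy: one checks the resulting lower bound on $|C_t|$ falls short of the Tur\'an value by a factor $\left(\prod_{j=0}^{t-1}(1-j/r)\right)^{1/(t-1)}<1$. The remedy is to pass to a $K_{r+1}$-free subgraph: by Lemma~\ref{Erdos} I would delete fewer than $\epsilon_1 n^2$ edges of $G$ to obtain a $K_{r+1}$-free graph $G'$ with $\omega(G')\le r$. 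Applying Theorem~\ref{thm main} to $G'$ and using monotonicity of the coefficient in $\omega$ gives $\rho_t(G')\le \frac{t}{r}\binom{r}{t}^{1/t}|C_t(G')|^{(t-1)/t}$, which rearranges to a lower bound on $|C_t(G')|$; a direct computation shows the constants are arranged so that this bound is asymptotically the exact $t$-clique count $\binom{r}{t}(n/r)^t$ of $T_r(n)$. Feeding this into Lemma~\ref{lem Sos} with $s=2$ converts it into $|C_2(G')|\ge\left(1-\frac1r-o(1)\right)\frac{n^2}{2}$, and since $G'\subseteq G$ the same bound holds for $|C_2(G)|$, completing the reduction.

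The main obstacle is the middle link: controlling how much the $t$-clique spectral radius drops under edge deletion, i.e. proving $\rho_t(G')\ge\rho_t(G)-C_t\,\epsilon_1^{\alpha}\,n^{t-1}$ for some exponent $\alpha>0$. I would handle this through the variational formula of Lemma~\ref{lem Qi}: taking an optimal nonnegative unit vector $x^\ast$ for $G$, the gap $\rho_t(G)-\rho_t(G')$ is at most $t\sum_{K}\prod_{v\in K}x^\ast_v$ summed over the $t$-cliques $K$ destroyed by the deletion, each of which contains a deleted edge. Bounding $\sum_{K}\prod_{v\in K}x^\ast_v\le\frac{1}{(t-2)!}\left(\sum_v x^\ast_v\right)^{t-2}\sum_{\{a,b\}\in R}x^\ast_a x^\ast_b$, where $R$ is the set of deleted edges, the decay comes from estimating $\sum_{\{a,b\}\in R}x^\ast_a x^\ast_b$ by H\"older and the power-mean inequality against $\|x^\ast\|_t=1$ and $|R|<\epsilon_1 n^2$; this yields $\alpha=(t-2)/t$ for $t\ge3$. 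The case $t=2$ is exactly Nikiforov's Theorem~\ref{thm Nikiforov} and may be cited directly; alternatively one obtains the needed decay from $\sum_{\{a,b\}\in R}x^\ast_a x^\ast_b\le\frac12\sqrt{2|R|}$, via the bound $\sqrt{2|R|}$ on the adjacency spectral radius of $R$.

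Finally I would unwind the parameter dependencies: choose $\epsilon_1$ and $\delta$ small enough (in terms of $\delta_0$, $r$, $t$) and $n_0\ge n_1$ large enough that the accumulated $o(1)$ and $\epsilon_1^{\alpha}$ error terms keep $|C_2(G)|$ above $\left(1-\frac1r-\delta_0\right)\frac{n^2}{2}$, at which point Theorem~\ref{thm Sim} delivers the conclusion that $G$ is obtained from $T_r(n)$ by adding and deleting at most $\epsilon n^2$ edges.
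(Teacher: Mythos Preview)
Your proposal does not address the stated Lemma~\ref{lem Sos} at all. That lemma is the S\'os--Straus inequality comparing normalized $t$-clique and $s$-clique counts in a $K_{r+1}$-free graph; the paper does not prove it but cites it from \cite{Sos_1982}. What you have written is a proof plan for Theorem~\ref{thm1}, the $t$-clique spectral stability theorem, in which Lemma~\ref{lem Sos} appears only as one ingredient.

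Taken as a sketch of Theorem~\ref{thm1}, your outline follows the paper's argument closely: delete few edges via Lemma~\ref{Erdos} to reach a $K_{r+1}$-free subgraph $G'$, apply Theorem~\ref{thm main} to $G'$ to bound $|C_t(G')|$ from below, convert to an edge bound via Lemma~\ref{lem Sos} with $s=2$, and finish with Theorem~\ref{thm Sim}. The one substantive difference is how you control the spectral drop $\rho_t(G)-\rho_t(G')$. The paper packages the destroyed $t$-cliques into an auxiliary graph $\hat G$ with $C_t(\hat G)=C_t(G)\setminus C_t(G')$, uses Lemma~\ref{lem Qi} to obtain $\rho_t(G)\le\rho_t(G')+\rho_t(\hat G)$, and then bounds $\rho_t(\hat G)$ by applying Theorem~\ref{thm main} itself (with $\omega\le n$) to $\hat G$; this gives decay of order $(\epsilon')^{(t-1)/t}n^{t-1}$, uniformly in $t\ge2$. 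Your hands-on H\"older estimate on $\sum_K\prod_{v\in K}x^\ast_v$ yields only the exponent $(t-2)/t$ and forces a separate treatment of $t=2$. The paper's device of reusing Theorem~\ref{thm main} on $\hat G$ is both cleaner and stronger. A minor point: the paper applies Theorem~\ref{thm Sim} to $G'$ and then adds back the at most $\epsilon' n^2$ deleted edges; your variant of passing the edge bound up to $G$ and applying Theorem~\ref{thm Sim} there also works, since Theorem~\ref{thm Sim} is stated for $H$-free graphs.
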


In our proofs, we will use H\"{o}lder's inequality, which is for two nonnegative vectors $x=(x_{1},\ldots,x_{n})^{T}$ and $y=(y_{1},\ldots,y_{n})^{T}$. 
If the positive numbers $p$ and $q$ satisfy $\frac{1}{p}+\frac{1}{q}=1$, then 
\begin{align*}
\sum_{i=1}^{n}x_{i}y_{i}\leq\left(\sum_{i=1}^{n}x_{i}^{p}\right)^{\frac{1}{p}}\left(\sum_{i=1}^{n}y_{i}^{q}\right)^{\frac{1}{q}}
\end{align*}
with equality holding if and only if $x$ and $y$ are proportional.
And Maclaurin's inequality, which states that for a nonnegative vector $x=(x_{1},\ldots,x_{n})^{T}$, the following holds for each $k\in\{1,2,\ldots,n\}$, 
\begin{align*}
\frac{x_{1}+\cdots+x_{n}}{n}\geq\left(\frac{\sum\limits_{1\leq i_{1}<\cdots<i_{k}\leq n}x_{i_{1}}\cdots x_{i_{k}}}{\binom{n}{k}}\right)^{\frac{1}{k}},
\end{align*}
the equality holds if and only if $x_{1}=\cdots=x_{n}$.

\section{The proof of Theorem}

To prove Theorem \ref{thm main}, we give the following conclusion, which is a high-order extension of the Motzkin-Straus theorem \cite{St_1965}. 
For a graph $G$ on $n$ vertices, let 
\begin{align*}
\mu_{t}(G)=\max\left\{\sum_{\{i_{1},i_{2},\cdots, i_{t}\}\in C_{t}(G)}x_{i_{1}}x_{i_{2}}\cdots x_{i_{t}}:\sum_{i=1}^{n}x_{i}=1,(x_{1},\cdots,x_{n})^{T}\in\mathbb{R}_{+}^{n}\right\},
\end{align*}
where $\mathbb{R}_{+}^{n}$ is the set of all $n$-dimensional nonnegative vectors.
\begin{lem}\label{lem1}
Let $G$ be a graph with clique number $\omega$ and let $t\leq \omega$. 
Then
\begin{align*}
\mu_{t}(G)=\binom{\omega}{t}\omega^{-t}.
\end{align*}
\end{lem}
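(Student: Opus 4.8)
The plan is to establish the two inequalities $\mu_{t}(G)\le\binom{\omega}{t}\omega^{-t}$ and $\mu_{t}(G)\ge\binom{\omega}{t}\omega^{-t}$ separately. The lower bound is the easy direction: if $K\subseteq V(G)$ is a clique of size $\omega$, put $x_{i}=1/\omega$ for $i\in K$ and $x_{i}=0$ otherwise. This is a feasible point for the maximization defining $\mu_{t}(G)$, and the objective value it produces is exactly the number of $t$-subsets of $K$ times $\omega^{-t}$, i.e. $\binom{\omega}{t}\omega^{-t}$. So $\mu_{t}(G)\ge\binom{\omega}{t}\omega^{-t}$, and this is where the explicit value in the statement comes from.

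For the upper bound, I would follow the Motzkin--Straus strategy, lifted to the $t$-clique setting. Fix an optimal vector $x$ attaining $\mu_{t}(G)$, chosen among all optimal vectors so that its support $S=\{i:x_{i}>0\}$ is \emph{minimal}. The key claim is that $S$ induces a clique. Suppose not: then there are two nonadjacent vertices $j,k\in S$. Consider the perturbation that shifts weight from $k$ to $j$ (and its reverse), i.e. $x(\varepsilon)$ with $x_{j}\mapsto x_{j}+\varepsilon$, $x_{k}\mapsto x_{k}-\varepsilon$, all other coordinates fixed; this keeps $\sum_{i}x_{i}=1$ and stays in $\mathbb{R}_{+}^{n}$ for small $|\varepsilon|$. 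Because $j$ and $k$ are not adjacent, no $t$-clique contains both of them, so the objective restricted to $t$-cliques through $j$ is linear in $\varepsilon$ and likewise for those through $k$; the function $f(\varepsilon)=\sum_{\{i_{1},\dots,i_{t}\}\in C_{t}(G)}x(\varepsilon)_{i_{1}}\cdots x(\varepsilon)_{i_{t}}$ is therefore affine in $\varepsilon$ on a neighbourhood of $0$. An affine function on an interval attains its maximum at an endpoint, so we can move $\varepsilon$ until $x_{k}$ (or $x_{j}$) hits $0$ without decreasing the objective, contradicting minimality of the support. Hence $S$ is a clique, so $|S|=:s\le\omega$.

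Once $S$ is known to be a clique, the problem collapses to the complete graph $K_{s}$: we must maximize $\sum_{1\le i_{1}<\dots<i_{t}\le s}y_{i_{1}}\cdots y_{i_{t}}$ over nonnegative $y$ with $\sum y_{i}=1$. By Maclaurin's inequality (quoted in the preliminaries), this elementary symmetric sum is at most $\binom{s}{t}(1/s)^{t}=\binom{s}{t}s^{-t}$, with equality at the uniform vector. It then remains to check that $s\mapsto\binom{s}{t}s^{-t}$ is nondecreasing in $s$ for $s\ge t$, so that $\binom{s}{t}s^{-t}\le\binom{\omega}{t}\omega^{-t}$; indeed the ratio of consecutive terms is $\frac{s+1}{s+1-t}\cdot\bigl(\tfrac{s}{s+1}\bigr)^{t}$, and a short estimate (e.g. using $(1+\tfrac1s)^{t}\le\tfrac{s+1}{s+1-t}$, equivalently Bernoulli-type bounds) shows this is $\ge 1$. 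Combining with the lower bound gives equality.

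I expect the main obstacle to be the support-minimality/exchange argument: one must be careful that the perturbation genuinely keeps \emph{all} coordinates nonnegative and that the objective is \emph{exactly} affine — this uses crucially that $j,k$ nonadjacent forces every monomial $x_{i_{1}}\cdots x_{i_{t}}$ in the sum to contain at most one of $x_{j},x_{k}$, so there is no $x_{j}x_{k}$ cross term and no higher-order dependence on $\varepsilon$. The monotonicity of $\binom{s}{t}s^{-t}$ is routine but should be stated cleanly since it is what upgrades "$S$ is a clique of size $\le\omega$" to the sharp constant $\binom{\omega}{t}\omega^{-t}$.
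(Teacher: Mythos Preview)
Your proposal is correct and follows essentially the same route as the paper: a direct lower bound from the uniform vector on a maximum clique, the Motzkin--Straus exchange argument on an optimal vector of minimal support to force the support to be a clique of size $s\le\omega$, then Maclaurin's inequality followed by the monotonicity of $s\mapsto\binom{s}{t}s^{-t}$. The only cosmetic difference is that the paper handles the monotonicity step more cleanly by writing $\binom{s}{t}s^{-t}=\tfrac{1}{t!}\prod_{k=1}^{t-1}\bigl(1-\tfrac{k}{s}\bigr)$, where each factor is visibly increasing in $s$, instead of your ratio/Bernoulli argument.
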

\begin{proof}
Let $G$ be a graph consisting of an $\omega$-clique and $n-\omega$ isolated vertices. 
Let $x_{1}=x_{2}=\cdots=x_{\omega}=\frac{1}{\omega}$ and $x_{\omega+1}=\cdots=x_{n}=0$. 
Then
\begin{align*}
\mu_{t}(G)\geq \binom{\omega}{t}\omega^{-t}.
\end{align*}
Suppose that $x=(x_{1},x_{2},\cdots,x_{n})^{T}$  is a vector chosen such that the expression 
\begin{align*}
\sum_{\{i_{1},i_{2},\cdots, i_{t}\}\in C_{t}(G)}x_{i_{1}}x_{i_{2}}\cdots x_{i_{t}}
\end{align*}
achieves its maximum value, and that $x$ contains the minimal number of nonzero entries among all such vectors. 
Let the vector $x$ have exactly $s$ nonzero entries. 
To prove that $s\leq \omega$, we proceed by contradiction. 
Assuming that $s>\omega$, there exist two nonzero entries $x_{i}$ and $x_{j}$ in $x$ such that the vertices $i$ and $j$ are not adjacent in $G$. 
Otherwise, the subgraph induced by the nonzero entries of $x$ would form a clique larger than $\omega$. 
Let $S_{G}(v,x)=\frac{\partial\left(\sum\nolimits_{\{i_{1},\cdots, i_{t}\}\in C_{t}(G)}x_{i_{1}}\cdots x_{i_{t}}\right)}{\partial x_{v}}$ for $v=1,2,\cdots,n$.
Therefore, we have
\begin{align*}
\sum_{\{i_{1},\cdots, i_{t}\}\in C_{t}(G)}x_{i_{1}}\cdots x_{i_{t}}
=x_{i}S_{G}(i,x)+x_{j}S_{G}(j,x)+M,
\end{align*}
where the monomial in $M$ contains neither $x_{i}$ nor $x_{j}$. 
Suppose that $S_{G}(i,x)\geq S_{G}(j,x)$, the proof for the scenario where $S_{G}(i,x)< S_{G}(j,x)$ would follow a similar approach. 
Constructing the vector $y=(y_{1},y_{2},\cdots,y_{n})^{T}$, where
\begin{align*}
y_{i}=x_{i}+x_{j}, \ y_{j}=0, \ y_{m}=x_{m} \ (m\in\{1,2,\cdots,n\} \setminus \{i,j\}).
\end{align*}
Obviously, we have $y_{1}+y_{2}+\cdots+y_{n}=1$ and
\begin{align*}
\sum_{\{i_{1},\cdots, i_{t}\}\in C_{t}(G)}y_{i_{1}}\cdots y_{i_{t}}-\sum_{\{i_{1},\cdots, i_{t}\}\in C_{t}(G)}x_{i_{1}}\cdots x_{i_{t}}
=x_{j}\left(S_{G}(i,x)-S_{G}(j,x)\right).
\end{align*}
By the assumption $x_{j}>0$ and $S_{G}(i,x)\geq S_{G}(j,x)$, we obtain
\begin{align}\label{equ2}
\sum_{\{i_{1},\cdots, i_{t}\}\in C_{t}(G)}y_{i_{1}}\cdots y_{i_{t}}-\sum_{\{i_{1},\cdots, i_{t}\}\in C_{t}(G)}x_{i_{1}}\cdots x_{i_{t}}\geq 0.
\end{align}
If $\sum_{\{i_{1},\cdots, i_{t}\}\in C_{t}(G)}y_{i_{1}}\cdots y_{i_{t}}-\sum_{\{i_{1},\cdots, i_{t}\}\in C_{t}(G)}x_{i_{1}}\cdots x_{i_{t}}> 0$, this would contradict the choice of $x$ as the vector that maximizes the expression $\sum_{\{i_{1},\cdots, i_{t}\}\in C_{t}(G)}x_{i_{1}}\cdots x_{i_{t}}$. 
And if the equality holds in (\ref{equ2}), the vector $y$ has $s-1$ nonzero entries, this would contradict the assumption that vector $x$ has the least number of nonzero entries among all vectors that achieve the maximum value of the expression. 
Thus, we have $s\leq \omega$. 
Suppose that $x_{1},x_{2},\cdots,x_{s}$ are all nonzero entries in $x$. 
By Maclaurin's inequality, we obtain
\begin{align*}
\mu_{t}(G)
&=\sum_{\{i_{1},i_{2},\cdots, i_{t}\}\in C_{t}(G)}x_{i_{1}}x_{i_{2}}\cdots x_{i_{t}}
\leq \sum_{1\leq i_{1}<i_{2}<\cdots<i_{t}\leq s}x_{i_{1}}x_{i_{2}}\cdots x_{i_{t}}
\leq \binom{s}{t}\left(\frac{1}{s}\sum_{i=1}^{s}x_{i}\right)^{t}\\
&= \binom{s}{t}s^{-t}
=\frac{1}{t!}\left(1-\frac{1}{s}\right)\left(1-\frac{2}{s}\right)\cdots\left(1-\frac{t-1}{s}\right)\\
&\leq \frac{1}{t!}\left(1-\frac{1}{\omega}\right)\left(1-\frac{2}{\omega}\right)\cdots\left(1-\frac{t-1}{\omega}\right)
= \binom{\omega}{t}\omega^{-t}.
\end{align*}
The proof is complete.
\end{proof}

Next, we present the proof of Theorem \ref{thm main}.

\begin{proof}[Proof of Theorem \ref{thm main}]
For a graph $G$, let $\mathcal{A}(G)=(a_{i_{1}i_{2}\cdots i_{t}})$ be the $t$-clique tensor of $G$.
Let $x=(x_{1},x_{2},\cdots,x_{n})^{T}$ be an nonnegative eigenvector corresponding to $\rho_{t}(G)$ with $x_{1}^{t}+\cdots+x_{n}^{t}=1$. 
Then
\begin{align}\label{equ3}
\rho_{t}(G)=\frac{\sum_{i_{1},\cdots,i_{t}=1}^{n}a_{i_{1}\cdots i_{t}}x_{i_{1}}\cdots x_{i_{t}}}{x_{1}^{t}+\cdots+x_{n}^{t}}=t\sum_{\{i_{1},i_{2},\cdots, i_{t}\}\in C_{t}(G)}x_{i_{1}}x_{i_{2}}\cdots x_{i_{t}}.
\end{align}
By H\"{o}lder's inequality, we have
\begin{align*}
\rho_{t}(G)\leq t\left(\sum_{\{i_{1},i_{2},\cdots, i_{t}\}\in C_{t}(G)} 1^{\frac{t}{t-1}}\right)^{\frac{t-1}{t}}\left(\sum_{\{i_{1},i_{2},\cdots, i_{t}\}\in C_{t}(G)}x_{i_{1}}^{t}x_{i_{2}}^{t}\cdots x_{i_{t}}^{t}\right)^{\frac{1}{t}}.
\end{align*}
Since $x_{1}^{t}+\cdots+x_{n}^{t}=1$, by Lemma \ref{lem1}, we obtain
\begin{align*}
\rho_{t}(G)\leq t|C_{t}(G)|^{\frac{t-1}{t}}\left(\binom{\omega}{t}\omega^{-t}\right)^{\frac{1}{t}}
=\frac{t}{\omega}\binom{\omega}{t}^{\frac{1}{t}}|C_{t}(G)|^{\frac{t-1}{t}}.
\end{align*}

Let $G$ be a complete regular $\omega$-partite graph for $\omega\geq t\geq2$. 
The number of $t$-cliques in $G$ is $|C_{t}(G)|=\omega^{t}\binom{\omega}{t}$, then
\begin{align*}
\frac{t}{\omega}\binom{\omega}{t}^{\frac{1}{t}}|C_{t}(G)|^{\frac{t-1}{t}}=\omega^{t-1}\binom{\omega-1}{t-1}.
\end{align*}
And each vertex is contained in $\omega^{t-1}\binom{\omega-1}{t-1}$ $t$-cliques. 
Thus, we have
\begin{align*}
\rho_{t}(G)=\omega^{t-1}\binom{\omega-1}{t-1}.
\end{align*}
\end{proof}

\begin{rem}
In \cite{Nikiforov_2002}, Nikiforov prove that 
\begin{align}\label{equ niki_2002}
2|C_{2}(G)|\frac{\omega-1}{\omega}\geq\rho^{2}(G).
\end{align}
Let $F$ be a unicyclic graph with girth $3$. 
The graph $F$ contains a subgraph isomorphic to $K_{3}$, then $\rho(F)\geq 2$. 
By (\ref{equ niki_2002}), we have the following inequality for the clique number $\omega(F)$ of $F$, 
\begin{align}\label{inequ niki}
\omega(F)\geq 1+\frac{2}{|C_{2}(F)|-2}.
\end{align}
For the graph $F$, the $3$-clique spectral radius $\rho_{3}(F)=1$.
By Theorem \ref{thm main}, we get
\begin{align}\label{inequ our}
\omega(F)\geq 3.
\end{align}
Obviously, the inequality (\ref{inequ our}) is tighter than (\ref{inequ niki}) when $|C_{2}(F)|$ is larger than $3$.

In addition, for a $K_{r+1}$-free graph $F^{\prime}$ on $n$ vertices, by Theorem \ref{thm main} and Lemma \ref{lem inequation of clique}, we have
\begin{align*}
|C_{t}(F^{\prime})|\leq \frac{n}{t}\frac{t}{r}\binom{r}{t}^{\frac{1}{t}}|C_{t}(F^{\prime})|^{\frac{t-1}{t}}.
\end{align*}
Then
\begin{align*}
|C_{t}(F^{\prime})|\leq\frac{n^{t}}{r^{t}}\binom{r}{t}.
\end{align*}
Theorem \ref{thm main} and Lemma \ref{lem inequation of clique} imply that an upper bound on the number of $t$-cliques in a $K_{r+1}$-free graph, a conclusion established in \cite{Erdos_1962}.
\end{rem}

\begin{proof}[Proof of Theorem \ref{thm1}]
For a graph $H$ with $\chi(H)=r+1\geq3$, let $G$ be an $H$-free graph on $n$ vertices. 
Let $x=(x_{1},x_{2},\cdots,x_{n})^{T}$ be a nonnegative eigenvector corresponding to $\rho_{t}(G)$ with $x_{1}^{t}+\cdots+x_{n}^{t}=1$.
By (\ref{equ3}), we have
\begin{align*}
\rho_{t}(G)=t\sum_{\{i_{1},i_{2},\cdots, i_{t}\}\in C_{t}(G)}x_{i_{1}}x_{i_{2}}\cdots x_{i_{t}}.
\end{align*}
For an arbitrary positive number $\epsilon>0$, let $\epsilon^{\prime}\in\left(0,\min\left\{\frac{\epsilon}{2},\left((t-1)!\epsilon\right)^{\frac{t}{t-1}}\right\}\right]$. 
By Lemma \ref{Erdos}, for a sufficiently large $n$, we can get a $K_{r+1}$-free graph $G^{\prime}$ from $G$ by removing less than $\epsilon^{\prime} n^{2}$ edges.
This process implies that the number of $t$-cliques removed from $G$ is at most $\epsilon^{\prime} n^{2}\binom{n-2}{t-2}$. 
Let $\hat{G}$ be an $n$-vertex graph with $C_{t}(\hat{G})=C_{t}(G)\setminus C_{t}(G^{\prime})$. 
Then
\begin{align*}
t\sum_{\{i_{1},\cdots, i_{t}\}\in C_{t}(G)}x_{i_{1}}\cdots x_{i_{t}}
=t\sum_{\{i_{1},\cdots, i_{t}\}\in C_{t}(G^{\prime})}x_{i_{1}}\cdots x_{i_{t}}
+t\sum_{\{i_{1},\cdots, i_{t}\}\in C_{t}(\hat{G})}x_{i_{1}}\cdots x_{i_{t}}.
\end{align*}
By Lemma \ref{lem Qi}, we obtain
\begin{align*}
\rho_{t}(G)\leq\rho_{t}(G^{\prime})+\rho_{t}(\hat{G}).
\end{align*}
For the graph $\hat{G}$, since $|C_{t}(\hat{G})|\leq\epsilon^{\prime} n^{2}\binom{n-2}{t-2}$ and by Theorem \ref{thm main}, 
\begin{align*}
\rho_{t}(\hat{G})\leq\frac{t}{n}\binom{n}{t}^{\frac{1}{t}}|C_{t}(\hat{G})|^{\frac{t-1}{t}}
\leq \left(\frac{t^{t}}{n^{t}}\frac{n^{t}}{t!}\right)^{\frac{1}{t}}\left(\epsilon^{\prime}\frac{n^{t}}{(t-2)!}\right)^{\frac{t-1}{t}}
\leq\frac{(\epsilon^{\prime})^{\frac{t-1}{t}}}{(t-1)!}n^{t-1}
\leq\epsilon n^{t-1}.
\end{align*}
Thus, we get
\begin{align*}
\rho_{t}(G^{\prime})\geq \rho_{t}(G)-\epsilon n^{t-1}.
\end{align*}
For a $K_{r+1}$-free graph $F$, by Theorem \ref{thm Sim}, for a positive number $\epsilon^{\prime\prime}\in(0,\frac{\epsilon}{2}]$, there exist $\delta^{\prime}>0$ and a sufficiently large natural number $n$ such that if $F$ is a graph on $n$ vertices and satisfies $|C_{2}(F)|\geq\left(1-\frac{1}{r}-\delta^{\prime}\right)\frac{n^{2}}{2}$, then $F$ can be obtained from $T_{r} (n)$ by adding and deleting at most 
$\epsilon^{\prime\prime} n^{2}$ edges.
Let $\delta=\left(\frac{\binom{r-1}{t-1}}{r(r-1)}\delta^{\prime}\right)^{\frac{t-2}{2}}-\epsilon$. 
Assume $n$ is a natural number that is sufficiently large such that 
\begin{align*}
\rho_{t}(G^{\prime})\geq\left(\binom{r-1}{t-1}\left(\frac{1}{r}\right)^{t-1}-\delta\right)n^{t-1}. 
\end{align*}
Then
\begin{align*}
\rho_{t}(G^{\prime})\geq\left(\binom{r-1}{t-1}\left(\frac{1}{r}\right)^{t-1}-\delta\right)n^{t-1}-\epsilon n^{t-1}=\left(\binom{r-1}{t-1}\left(\frac{1}{r}\right)^{t-1}-\delta-\epsilon\right)n^{t-1}.
\end{align*}
The graph $G^{\prime}$ is $K_{r+1}$-free, by Theorem \ref{thm main}, we obtain
\begin{align*}
\frac{t}{r}\binom{r}{t}^{\frac{1}{t}}|C_{t}(G^{\prime})|^{\frac{t-1}{t}}
\geq\rho_{t}(G^{\prime})\geq\left(\binom{r-1}{t-1}\left(\frac{1}{r}\right)^{t-1}-\delta-\epsilon\right)n^{t-1}.
\end{align*}
By Lemma \ref{lem Sos}, we have
\begin{align*}
\frac{t}{r}\binom{r}{t}\left(\frac{|C_{2}(G^{\prime})|}{\binom{r}{2}}\right)^{\frac{t-1}{2}}
\geq\frac{t}{r}\binom{r}{t}^{\frac{1}{t}}|C_{t}(G^{\prime})|^{\frac{t-1}{t}}
\geq\left(\binom{r-1}{t-1}\left(\frac{1}{r}\right)^{t-1}-\delta-\epsilon\right)n^{t-1}.
\end{align*}
Then
\begin{align*}
|C_{2}(G^{\prime})|
&\geq\left(\left(\frac{1}{r}\right)^{t-1}\binom{r}{2}^{\frac{t-1}{2}}
-\frac{r}{t}\binom{r}{t}^{-1}\binom{r}{2}^{\frac{t-1}{2}}(\delta+\epsilon)\right)^{\frac{2}{t-1}}n^{2}\\
&\geq\left(\left(\frac{1}{r}\right)^{2}\binom{r}{2}-\binom{r-1}{t-1}^{-1}\binom{r}{2}(\delta+\epsilon)^{\frac{2}{t-1}}\right)n^{2}\\
&=\left(1-\frac{1}{r}-\binom{r-1}{t-1}^{-1}r(r-1)(\delta+\epsilon)^{\frac{2}{t-1}}\right)\frac{n^{2}}{2}
=\left(1-\frac{1}{r}-\delta^{\prime}\right)\frac{n^{2}}{2}.
\end{align*}
By Theorem \ref{thm Sim}, the graph $G^{\prime}$ can be obtained from $T_{r} (n)$ by adding and deleting at most $\epsilon^{\prime\prime} n^{2}$ edges. 
Since we removed $\epsilon^{\prime} n^{2}$ edges from $G$ to $G^{\prime}$, the graph $G$ can be derived from $T_{r} (n)$ by adding and deleting at most $\epsilon^{\prime} n^{2}+\epsilon^{\prime\prime} n^{2}\leq\epsilon n^{2}$ edges. 
\end{proof}

\section*{Acknowledgement}

This work is supported by the National Natural Science Foundation of China (No. 12071097, 12371344), the Natural Science Foundation for The Excellent Youth Scholars of the Heilongjiang Province (No. YQ2022A002) and the Fundamental Research Funds for the Central Universities.

\vspace{3mm}
\noindent

\end{CJK*}
\end{spacing}
\end{document}